\documentclass[reqno,a4paper,12pt]{amsart}

\usepackage{xcolor}
\usepackage{amsmath,amsthm,amssymb,mathtools}
\usepackage{hyperref}

\usepackage[no-math]{fontspec}
\usepackage[frozencache=true,cachedir=minted-cache]{minted}

\newmintinline[lean]{lean4}{bgcolor=white}
\newminted[leancode]{lean4}{escapeinside=!!, 
                            breaklines, 
                            fontsize=\normalsize}
\usepackage[sort,nocompress]{cite}
\mathtoolsset{showonlyrefs}

\newtheorem{definition}{Definition}[section]
\newtheorem{theorem}[definition]{Theorem}
\newtheorem*{theorem*}{Theorem}
\newtheorem{lemma}[definition]{Lemma}

\newtheorem{proposition}[definition]{Proposition}

\def\N{{\mathbb N}}
\def\Z{{\mathbb Z}}
\def\R{{\mathbb R}}
\def\T{{\mathbb T}}
\def\C{{\mathbb C}}

\newcommand{\one}{\mathbf 1}

\newcommand{\Span}{\operatorname{span}}

\begin{document}

\title[Cantor measures with odd base do not admit Fourier frames]{Cantor measures with odd base do not admit Fourier frames}

\author[Jaume de Dios Pont]{Jaume de Dios Pont}
\address{Center for Data Science, New York University, New York, New York 10011, USA}
\email{jdedios@nyu.edu}

\author[Lukas Liehr]{Lukas Liehr}
\address{Department of Mathematics, Bar-Ilan University, Ramat-Gan 5290002, Israel}
\email{lukas.liehr@biu.ac.il}

\author[Mitchell A. Taylor]{Mitchell A. Taylor}
\address{ Department of Mathematics\\
ETH Z\"urich, R\"amistrasse 101, 8092 Z\"urich, Switzerland}
\email{mitchell.taylor@math.ethz.ch}

\date{\today}
\subjclass[2020]{28A80, 42B05, 46B15}
\keywords{Fourier frames, spectrality, Cantor measure}

\begin{abstract}
We prove that the Cantor measure with base $b$ does not admit a Fourier frame whenever $b > 1$ is an odd integer. In particular, this answers a question of Strichartz on the existence of a Fourier frame for the middle third Cantor measure. A formalization of our main result in Lean 4 is also provided.
\end{abstract}

\maketitle

\section{Introduction and result}

\subsection{}
A finite Borel measure $\mu$ on $\R$ is called \emph{spectral} if it admits an orthonormal Fourier basis, i.e., there exists a countable set $\Lambda \subset \R$ such that, after scaling, the system $E(\Lambda) = \{ e_\lambda : \lambda \in \Lambda \}$ with $e_\lambda(x)=e^{2\pi i \lambda x}$ is an orthonormal basis for $L^2(\mu)$. The system $E(\Lambda)$ is called a Fourier frame for
$L^2(\mu)$ if there exist constants $0<A\leq B<\infty$ such that
$$
    A\|f\|_{L^2(\mu)}^2
    \leq
    \sum_{\lambda\in\Lambda}
    \left|
        \int_\R f(x)e^{-2\pi i\lambda x}\,d\mu(x)
    \right|^2
    \leq
    B\|f\|_{L^2(\mu)}^2
$$
for every $f\in L^2(\mu)$. In this case, $\mu$ is said to be \emph{frame-spectral}.
Thus, spectral measures form a distinguished subclass of frame-spectral measures.
The basic example of a spectral measure is given by $\mathcal{L}|_{[0,1]}$, where $\mathcal{L}|_\Omega$ denotes the restriction of the Lebesgue measure to a measurable subset $\Omega \subset \R$.
The classification of spectrality of measures of the form $\mathcal{L}|_{\Omega}$ in terms of tiling properties of $\Omega$ is the well-known Fuglede problem in dimension one \cite{fuglede1974commuting}. On the other hand, the existence of frames for finite measures of the form $\mathcal{L}|_{\Omega}$ was settled by Nitzan, Olevskii and Ulanovskii \cite{nitzan2016exponential}, who proved that $\mathcal{L}|_{\Omega}$ is frame-spectral whenever $\Omega$ has finite measure.

\subsection{}
Considerable attention has been devoted to identifying measures that are spectral or frame-spectral beyond restrictions of the Lebesgue measure, particularly among self-similar measures such as Cantor measures. The study of spectrality for these measures was initiated by
Jorgensen and Pedersen \cite{JorgensenPedersen98}, who proved that Cantor measures $\mu_b$ with respect to an even base $b$ (see Section \ref{sec:2} for the precise definition) are spectral. On the other hand, if $b>1$ is odd, then every collection of pairwise orthogonal exponentials in $L^2(\mu_b)$ has cardinality at most two, therefore ruling out the existence of an exponential basis. Strichartz gave an alternative proof of completeness for the exponential bases constructed by Jorgensen and Pedersen \cite{Strichartz98}, and later extended the construction to broader classes of Cantor measures while developing mock Fourier series and transforms \cite{Strichartz00}. The possible spectra of the middle fourth Cantor measure and related measures were further analyzed through rooted-tree parametrizations in \cite{DutkayHanSun09,Dai16}.  Łaba and Wang developed a systematic theory of spectral Cantor measures based on finite Fourier matrices and compatible digit sets \cite{LabaWang02}, and the arithmetic classification of spectral Bernoulli convolutions was advanced by Hu and Lau \cite{HuLau08} and completed by Dai \cite{Dai12}; related classifications were obtained in \cite{DaiHeLai13,DaiHeLau14}. This line of research culminated in the theorem of Dutkay, Haussermann and Lai, which holds in arbitrary dimension: every Hadamard triple generates a spectral self-affine measure \cite{DutkayHaussermannLai19}. For a broader overview, we refer to the survey \cite{DutkayLaiWang17}.

\subsection{}
The result of Jorgensen and Pedersen on spectrality of Cantor measures leaves open the question of whether an odd-base Cantor measure can nevertheless admit a Fourier frame. In this context, a question posed by Strichartz \cite{Strichartz00} asks if the middle third Cantor measure is frame-spectral.
Despite various developments on the existence and nonexistence of Fourier frames for measures (see the discussion following Theorem \ref{thm:no-frame}), the odd-base Cantor measures remained outside both the known constructions and known obstruction theorems. The main result of the present paper settles this problem by showing that any Cantor measure with an odd base does not admit a Fourier frame.

\begin{theorem}\label{thm:no-frame}
    If $b > 1$ is an odd integer, then the Cantor measure $\mu_b$ does not admit a Fourier frame.
\end{theorem}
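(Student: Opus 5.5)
The plan is to play the two frame inequalities against each other across all scales, using the self-similarity of $\mu_b$. The goal is a lower bound that grows geometrically with the scale $n$, which the fixed upper frame bound $B$ cannot accommodate. Write $\mu=\mu_b$, with $\widehat\mu(\xi)=\prod_{k\ge1}\tfrac12\bigl(1+e^{-2\pi i\, d\,\xi/b^k}\bigr)$ for the appropriate digit $d$ (for $b=3$, $d=2$). Let $\mu_w$ be the restriction of $\mu$ to a level-$n$ cylinder $I_w$. By self-similarity, $\mu_w$ is $2^{-n}$ times an affine image of $\mu$ contracted by $b^{-n}$.

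\emph{Step 1 (scaled frame inequalities).} Assume $E(\Lambda)$ is a frame with bounds $A,B$. Test the frame inequality with $f=e_\xi\mathbf 1_{I_w}$, and more generally with $f=\sum_w c_w e_\xi \mathbf 1_{I_w}$. This gives, for every $n$ and every $\xi$,
\[
A\,2^{n}\;\le\;\sum_{\lambda\in\Lambda}\bigl|\widehat\mu\bigl(b^{-n}(\lambda-\xi)\bigr)\bigr|^2\;\le\;B\,2^n ,
\]
and an analogous statement with a matrix of phases $e^{2\pi i (\lambda-\xi)t_w}$ when several cylinders are combined. The upper Bessel bound, together with $|\widehat\mu|\ge c>0$ near $0$, yields the density estimate $\#\bigl(\Lambda\cap J\bigr)\lesssim B\,2^n$ for every interval $J$ of length $b^n$. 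Averaging in $\xi$ alone only gives $R\lesssim R$ via $\int_{-T}^T|\widehat\mu|^2\asymp T^{1-s}$ with $s=\log2/\log b$. So a naive averaging argument is consistent, and the contradiction has to come from finer arithmetic.

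\emph{Step 2 (use oddness of $b$).} For odd $b$, the zero set $Z=\{\widehat\mu=0\}=\bigcup_k \tfrac{b^k}{2d}(2\Z+1)$ has no difference structure compatible with orthogonality. Equivalently, $|\widehat\mu(\xi)|^2=\prod_k\cos^2(\pi d\xi/b^k)$ stays bounded away from $0$ on a fixed proportion of each $b$-adic block, and the factor $\cos^2(\pi d\xi/b)$ is not "balanced" modulo $b$. Concretely, I would take $f$ supported on two sibling cylinders at level $n+1$ with coefficients $\pm1$, or on the $b$-adic pattern of sub-cylinders. I would then compare the frame sums at levels $n$ and $n+1$ using the refinement identity $\widehat\mu(\eta)=\tfrac12\bigl(1+e^{-2\pi i d\eta/b}\bigr)\widehat\mu(\eta/b)$. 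The aim is an inequality of the form
\[
\inf_\xi \sum_{\lambda}\bigl|\widehat\mu\bigl(b^{-(n+1)}(\lambda-\xi)\bigr)\bigr|^2\;\ge\;(2+c)\,\inf_\xi \sum_{\lambda}\bigl|\widehat\mu\bigl(b^{-n}(\lambda-\xi)\bigr)\bigr|^2 ,
\]
with $c=c(b)>0$. Its source is that for odd $b$ the averaged multiplier $\tfrac1b\sum_{j}\cos^2\bigl(\pi d(\eta+j)/b\bigr)$ cannot vanish on the phases where the previous level is small. That averaged multiplier equals $\tfrac12$ exactly, but the minimum over the relevant phase configuration exceeds what orthogonal-type cancellation would require.

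\emph{Step 3 (conclude).} Iterating Step 2 gives a lower bound $\gtrsim A(2+c)^n$, contradicting the upper bound $B\,2^n$ from Step 1 as $n\to\infty$. Hence no frame exists.

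The main obstacle is Step 2: extracting a genuine gain $c>0$ uniformly in $\xi$ and in $\Lambda$. The factor $2$ matches exactly in the trivial bookkeeping, so one has to exhibit a test function (a signed combination of sub-cylinders) whose frame sum sees the failure of the $b$-periodic cancellation that only even $b$ permits. I would first try the explicit choice $f=e_\xi(\mathbf 1_{I_{w0}}-\mathbf 1_{I_{w1}})$ and analyze the resulting $|1-e^{2\pi i d\eta/b^{n+1}}|^2$ weight against the density bound from Step 1.
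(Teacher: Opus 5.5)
Step 1 is correct: testing $e_\xi\one_{I_w}$ gives $A2^n\le S_n(\xi)\le B2^n$ with $S_n(\xi)=\sum_{\lambda}F((\lambda-\xi)/b^n)$. But the whole proof rests on Step 2, and there you only state the hoped-for inequality $\inf_\xi S_{n+1}\ge(2+c)\inf_\xi S_n$. No test function or estimate producing $c>0$ is given, and your own remark that the averaged multiplier is exactly $\tfrac12$ suggests there is no gain.

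The test function you propose to try first cannot supply it. Take $g=e_\xi(\one_{I_{w0}}-\one_{I_{w1}})$ at level $n+1$ and use $F(bx)=\cos^2(2\pi x)F(x)$. This gives the exact identity $S_{n+1}(\xi)=S_n(\xi)+H_n(\xi)$, where $H_n(\xi)=\sum_\lambda\sin^2(2\pi(\lambda-\xi)/b^{n+1})F((\lambda-\xi)/b^{n+1})$. The frame inequality for $g$ says precisely $A2^n\le H_n(\xi)\le B2^n$. This computation does not use the parity of $b$. For even $b$ with $\Lambda$ a Jorgensen--Pedersen spectrum one has $S_n\equiv H_n\equiv 2^n$, so $S_{n+1}=2S_n$ exactly. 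Hence bookkeeping with modulated single- or two-cylinder test functions cannot by itself produce a factor $2+c$. Oddness would have to enter through some further ingredient that the proposal does not identify, and so Step 3 has nothing to iterate.

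The paper's contradiction uses a different mechanism. It is not geometric growth, but a scale-uniform lower bound for a quantity that is pointwise dominated by $F(\lambda)$ and tends to $0$ pointwise.

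\begin{itemize}
\item Expanding $W_n$ in Haar functions and using self-similarity turns the frame inequalities on $W_n$ into $A\|P\|^2\le\int_\T|P|^2\,d\sigma_n\le B\|P\|^2$. This holds for polynomials with frequencies in $Q_{n-1}$, where $\sigma_n=2^{-(n-1)}\sum_\lambda w_n(\lambda)\delta_{z_n(\lambda)}$.
\item The test polynomial is $(1+e^{-2\pi iz})p_n$ with $p_n=2^{-(n-2)/2}D_{n-2}(bz)$, which is coherent and spread over all $2^{n-2}$ relevant cylinders. Its modulus squared is the Riesz product $2^{n-2}\prod_{r=1}^{n-2}\cos^2(\pi b^rz)$, which also appears in the product formula for $F(\lambda)$.
\item Cauchy--Schwarz against the upper bound for $p_n$ gives $\int_\T\cos^4(\pi z)|p_n|^2\,d\sigma_n\ge A^2/(4B)$, hence $\sum_\lambda X_n(\lambda)\ge A^2/(2b^2B)$.
\item Oddness enters only through $\cos^2x\ge b^{-2}\sin^2x\cos^2(bx)$, which fails for even $b$ at $x=\pi/2$. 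This inequality gives $X_n(\lambda)\le F(\lambda)$.
\item Finally, $\sum_\lambda F(\lambda)\le B$ (from the constant function) and $X_n(\lambda)\to0$ for each fixed $\lambda$. Dominated convergence then gives $\sum_\lambda X_n(\lambda)\to0$, contradicting the uniform lower bound.
\end{itemize}

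Your Step 2 is missing exactly this: a test function tied to the product structure of $F$, plus a way to exploit that $\cos(bx)$ vanishes wherever $\cos x$ does when $b$ is odd.
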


Combining Theorem~\ref{thm:no-frame} with the spectral construction of Jorgensen and Pedersen yields the following classification: $\mu_b$ admits a Fourier frame if and only if $b> 1$ is even. Consequently, the even/odd dichotomy for the measures $\mu_b$ persists not only at the level of orthonormal Fourier bases, but at the broader level of Fourier frames.

\subsection{}
In more general terms, the problem of Strichartz mentioned above is concerned with the question of whether a non-spectral fractal measure can nevertheless admit a Fourier frame.
Lai and Wang constructed singular fractal measures that have only finitely many mutually orthogonal exponentials, but nevertheless admit Fourier frames \cite{LaiWang17}. Picioroaga and Weber constructed exponential Parseval frames for the Cantor measure $\mu_4$ by means of representations of Cuntz algebras \cite{PicioroagaWeber17}.

Various general results have clarified the structure of measures admitting Fourier frames, such as necessary density conditions for frame spectra in terms of estimates on the  Beurling dimension  \cite{DutkayHanSunWeber11}. In particular, Dutkay, Emami and Lai proved that the middle third Cantor measure admits an exponential Riesz sequence of maximal possible Beurling dimension \cite{DutkayEmamiLai21}. 

Regarding the properties of the measure, Lai characterized compactly supported absolutely continuous measures admitting Fourier frames \cite{Lai11} and He, Lai, and Lau proved, among other results, the pure type property for measures that are frame-spectral \cite{HeLaiLau13}.

There are also nonexistence results for Fourier frames for other classes of singular measures: for sums of singular measures failing translational absolute continuity \cite{FuLai18}, for mixed measures containing a surface piece with a point of non-zero Gaussian curvature \cite{Lev18}, for surface measures on convex bodies with everywhere positive Gaussian curvature \cite{IosevichLaiLiuWyman22}, and for a generic class of Salem measures \cite{li2025fourier}.
For results on the more general notions of frame measures, almost-Parseval frame towers, and weighted Fourier frames, we refer to the works \cite{DutkayHanWeber14,DutkayLai14,LaiWang17,AnFuLai19,DutkayRanasinghe16}.

\subsection{Usage of Large Language Models}

Large language models played an important role in the development of this work. At an early stage, we considered the spaces $V_n$ of functions that are constant on level-$n$ Cantor sets and the orthogonal differences $W_n=V_n\ominus V_{n-1}$. The initial goal was to obtain a positive solution to the Strichartz problem by constructing finite sets $\Gamma_n\subset\R$ such that $E(\Gamma_n)$ would form a frame for $W_n$ with frame bounds independent of $n$. Motivated by Strichartz's compatible pair and tower formalism \cite{Strichartz00}, and by replacing the compatible even dilations in the one dimensional model by the odd dilation $3$, we were led to the translated ternary digit sets
$$
        \Xi_m=
        \left\{
        \frac34\sum_{r=0}^{m-1}\varepsilon_r3^r:
        \varepsilon_r\in\{0,1\}
        \right\},
        \qquad
        \Gamma_n(\tau_n)=\tau_n+\frac{3^n}{4}+\Xi_{n-1}.
$$
With the help of GPT-5.5, we analyzed why these candidates fail to provide a frame for $W_n$ with bounds independent of $n$. For arbitrarily large $n$, one can find $\lambda_*\in\Gamma_n$ and increasingly many $\lambda_j\in\Gamma_n$ such that $\lambda_*-\lambda_j=3^{k_j}$. Since, for $F(\xi)=|\widehat{\mu_3}(\xi)|^2$, self-similarity gives $F(3^k)=F(1)>0$, the necessary Bessel estimate
$
        \sup_{t\in\R}\sum_{\lambda\in\Lambda}F(t-\lambda)<\infty
$
fails for these constructions by taking $t=\lambda_*$.

This failed construction suggested the obstruction used in the final proof. Indeed, the upper frame inequality applied to the constant function gives the bound
$
        \sum_{\lambda\in\Lambda}F(\lambda)\le B,
$
while the lower frame inequalities on the spaces $W_n$ force a lower bound independent of the scale. More precisely, a suitable normalized polynomial $p_n$ whose original expression was subsequently simplified into a more concise equivalent form with the assistance of GPT-5.5, leads to
$$
        X_n(\lambda)
        =
        \frac1{b^2 2^{n-2}}
        F(\lambda/b^n)\sin^2(2\pi\lambda/b^n)
        \cos^4(\pi z_n(\lambda))
        |p_n(z_n(\lambda))|^2,
$$
and the frame inequalities imply the lower bound $\sum_{\lambda\in\Lambda}X_n(\lambda)\ge \frac{A^2}{2b^2B}$, from which the final contradiction is obtained.

In this work, GPT-5.5 in the ChatGPT web interface was used for mathematical exploration, while GPT-5.5 in Codex helped with the Lean formalization. The authors checked and rewrote every part of the paper influenced by LLM-generated material, as well as the statement of the Lean formalization and all definitions it depends on.

\section{Preliminaries}\label{sec:2}

\subsection{}
Throughout the article, we fix an odd integer $b > 1$. For $d\in\{0,1\}$, we define the affine contraction $T_d : \R \to \R$ by 
$$
T_d(x)=\frac{x+2d}{b}.
$$
We write $\Omega:=\{0,1\}^{\N}$, let $\nu$ be the fair Bernoulli measure on $\Omega$ and define $\pi : \Omega \to \R$ by
$$
\pi(\omega)=\sum_{j=1}^{\infty}\frac{2\omega_j}{b^j}.
$$
The image of $\pi$ is the Cantor set with base $b$, which we denote by $C_b$. It is well-known that $C_b \subset [0,2/(b-1)]$ is compact and $C_b = T_0(C_b) \cup T_1(C_b)$. The Cantor measure $\mu_b$ on $C_b$ is the push-forward measure of $\nu$ under the map $\pi$, i.e., $\mu_b = \pi_{\#}\nu$. Alternatively, $\mu_b$ is defined as the unique probability measure supported on $C_b$ such that $\mu_b$ satisfies the self-similarity condition
\begin{equation}\label{eq:self-similar}
        \mu_b=\frac12\,\mu_b\circ T_0^{-1}
             +\frac12\,\mu_b\circ T_1^{-1}.
\end{equation}
Equivalently, for every bounded Borel function $f:\R\to\C$, one has
\begin{equation}\label{eq:self-similar-integral}
        \int_{\R}f\,d\mu_b
        =
        \frac12\int_{\R}f(T_0x)\,d\mu_b(x)
        +
        \frac12\int_{\R}f(T_1x)\,d\mu_b(x).
\end{equation}
The existence and uniqueness of the invariant measure follows from the general theory of contractive iterated function systems \cite{hutchinson1981fractals}.

\subsection{}
If $\eta=(\eta_1,\ldots,\eta_m) \in \{0,1 \}^m$ we put
$$
        |\eta| :=m,\quad
        T_\eta := T_{\eta_1}\circ\cdots\circ T_{\eta_m},\quad
        C_\eta := T_\eta(C_b),
$$
with the convention that $T_\varnothing = \mathrm{Id}$ and $C_\varnothing=C_b$.
We define
$$
        a_\eta := \sum_{j=1}^{m}\frac{2\eta_j}{b^j}.
$$
Basic properties of the above measures and operators are recorded in the following lemma.
\begin{lemma}\label{prop:cantor-measure} The following statements hold.
\begin{enumerate}
    \item For every $m\in\N_0$, every $\eta\in\{0,1\}^m$ and every $x\in\R$, we have
\begin{equation}\label{eq:affine-cylinder}
        T_\eta(x)=a_\eta+b^{-|\eta|}x.
\end{equation}
\item For every $m \in \N_0 $, the sets $\{C_\eta:\eta\in\{0,1\}^m\}$ are pairwise disjoint. Moreover, for every $\eta\in\{0,1\}^m$ we have
\begin{equation}\label{eq:cylinder-mass-support}
        \mu_b(C_\eta)=2^{-|\eta|}.
\end{equation}
\item For every $m\in \N_0$, every $\eta\in\{0,1\}^m$ and every bounded
Borel function $f:\R\to\C$, the integral of $f$ over $C_\eta$ with respect to the Cantor measure $\mu_b$ is given by
\begin{equation}\label{eq:cylinder-integral}
        \int_{C_\eta}f\,d\mu_b
        =
        2^{-|\eta|}
        \int_{\R}f(T_\eta x)\,d\mu_b(x).
\end{equation}
\end{enumerate}
\end{lemma}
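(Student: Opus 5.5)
We prove the three parts in order, since each one feeds into the next.

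For (1) we induct on $m=|\eta|$. The case $m=0$ is trivial: $T_\varnothing=\mathrm{Id}$ and $a_\varnothing=0$. For the step, write $\eta=(\eta_1,\eta')$ with $|\eta'|=m-1$. Then $T_\eta(x)=T_{\eta_1}(T_{\eta'}x)=\bigl(a_{\eta'}+b^{-(m-1)}x+2\eta_1\bigr)/b$, and
$$\frac{2\eta_1}{b}+\frac{a_{\eta'}}{b}=\frac{2\eta_1}{b}+\sum_{j=1}^{m-1}\frac{2\eta'_j}{b^{j+1}}=a_\eta .$$

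For (2), we first use (1) and the definition of $\pi$ to check that $C_\eta=\pi(\{\omega:\omega_j=\eta_j,\ 1\le j\le m\})$. Indeed, $T_\eta(\pi(\omega'))=a_\eta+b^{-m}\pi(\omega')=\pi(\eta\omega')$, where $\eta\omega'$ is the concatenation.

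Disjointness for fixed $m$ is the main point. We use that $\pi$ is injective on $\Omega$. Suppose $\omega\ne\omega'$ and let $k$ be the first index where they differ. Then
$$|\pi(\omega)-\pi(\omega')|\ \ge\ \frac{2}{b^k}-\sum_{j>k}\frac{2}{b^j}=\frac{2}{b^k}\Bigl(1-\frac{1}{b-1}\Bigr)>0,$$
and the last factor is positive because $b\ge3$ (here we use that $b$ is odd and $b>1$). Hence distinct cylinders in $\Omega$ have disjoint images, so the sets $C_\eta$, $|\eta|=m$, are pairwise disjoint.

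For the mass, injectivity also gives $\pi^{-1}(C_\eta)=[\eta]$, the cylinder set of sequences starting with $\eta$. Therefore $\mu_b(C_\eta)=\nu([\eta])=2^{-m}$. The sets $C_\eta$ are compact, hence Borel, so all of this is measurable.

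For (3), by the push-forward definition,
$$\int_{C_\eta}f\,d\mu_b=\int_{[\eta]}f(\pi(\omega))\,d\nu(\omega).$$
Now use the shift identity: $\nu$ restricted to $[\eta]$ is $2^{-m}$ times the image of $\nu$ under $\omega'\mapsto\eta\omega'$. This is the product structure of the Bernoulli measure, verified first on cylinders and then extended by uniqueness of measures. Since $\pi(\eta\omega')=T_\eta(\pi(\omega'))$, we obtain
$$\int_{[\eta]}f(\pi(\omega))\,d\nu(\omega)=2^{-m}\int_\Omega f\bigl(T_\eta\pi(\omega')\bigr)\,d\nu(\omega')=2^{-m}\int_\R f(T_\eta x)\,d\mu_b(x).$$

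An alternative route to (3) is to iterate the integral form of self-similarity \eqref{eq:self-similar-integral} $m$ times, applied to $f\one_{C_\eta}$. Disjointness from (2) then kills every term except the one indexed by $\eta$, because $T_{\eta'}x\in C_{\eta'}$ for $x\in C_b$.

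The only genuinely non-formal step is the separation estimate giving injectivity of $\pi$. It needs $b\ge3$, and this holds for odd $b>1$.
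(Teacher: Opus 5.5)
Your proposal is correct and follows essentially the same route as the paper: the same induction for (1), the identity $\pi^{-1}(C_\eta)=[\eta]$ for the mass, and the same product-structure argument for $\nu$ combined with $\pi(\eta\omega')=T_\eta(\pi(\omega'))$ for (3). The only difference is cosmetic. You derive disjointness from injectivity of $\pi$ via a first-difference estimate, whereas the paper places the two children $C_{(\eta,0)},C_{(\eta,1)}$ in disjoint intervals and inducts on the level; both rest on the same gap $2/(b(b-1))<2/b$, i.e.\ $b\ge 3$.
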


\begin{proof}
Property (1) is proved by induction on $m=|\eta|$. The case
$m=0$ is the identity map. If $\eta=(\eta_1,\ldots,\eta_m)$ with
$m\ge1$, then the induction hypothesis applied to
$\eta'=(\eta_2,\ldots,\eta_m)$ gives
$$
        T_{\eta'}(x)
        =
        \sum_{j=2}^{m}\frac{2\eta_j}{b^{j-1}}+b^{-(m-1)}x.
$$
Therefore
$$
        T_\eta(x)
        =
        T_{\eta_1}(T_{\eta'}x)
        =
        \sum_{j=1}^{m}\frac{2\eta_j}{b^j}+b^{-m}x
        =
        a_\eta+b^{-m}x.
$$

We next prove the pairwise disjointness. Let
$\eta=(\eta_1,\ldots,\eta_m)\in\{0,1\}^m$, $d\in\{0,1\}$ and write
$(\eta,d)=(\eta_1,\ldots,\eta_m,d)$. Since
$C_b\subset[0,2/(b-1)]$, property (1) gives
$$
        C_{(\eta,0)}
        \subset
        a_\eta+b^{-m}\left[0,\frac{2}{b(b-1)}\right],
$$
whereas
$$
        C_{(\eta,1)}
        \subset
        a_\eta+b^{-m}\left[\frac2b,\frac2b+\frac{2}{b(b-1)}\right].
$$
These two intervals are disjoint because $2/(b(b-1))<2/b$. Hence, at
each step, the two sets obtained by appending one more coordinate are
disjoint. Induction on the level gives pairwise disjointness of
$\{C_\eta:\eta\in\{0,1\}^m\}$.

For $\eta=(\eta_1,\ldots,\eta_m)$ define
$$
        [\eta]
        =
        \{\omega\in\Omega:\omega_j=\eta_j \text{ for }1\le j\le m\},
$$
so that $C_\eta=\pi([\eta])$. Since the level-$m$ sets $C_\eta$ are
pairwise disjoint, we have $\pi^{-1}(C_\eta)=[\eta]$. Thus
$$
        \mu_b(C_\eta)
        =
        \nu(\pi^{-1}(C_\eta))
        =
        \nu([\eta])
        =
        2^{-m}
        =
        2^{-|\eta|}.
$$

 It remains to prove the integral formula. The desired formula is the
integral form (see \cite[Lemma~2.5]{khalil2020singular}) of the identity
$$
        \mu_b|_{C_\eta}
        =
        2^{-|\eta|}(T_\eta)_\#\mu_b.
$$
 We give a direct proof of this property. To do so, we observe that the conditional distribution of
the tail coordinates after fixing the first $m$ coordinates equal to
$\eta_1,\ldots,\eta_m$ is again $\nu$. Also,
$$
        \pi(\eta_1,\ldots,\eta_m,\omega_1,\omega_2,\ldots)
        =
        T_\eta(\pi(\omega)).
$$
Hence, for every bounded Borel function $f$, we have
$$
        \int_{C_\eta}f\,d\mu_b
        =
        \int_{[\eta]}f(\pi(\omega))\,d\nu(\omega)
        =
        2^{-|\eta|}
        \int_{\Omega}f(T_\eta(\pi(\omega)))\,d\nu(\omega).
$$
Since $\mu_b=\pi_\#\nu$, the last integral equals
$\int_{\R}f(T_\eta x)\,d\mu_b(x)$.
\end{proof}

\section{Haar functions on Cantor sets}

We define the Fourier transform of the measure $\mu_b$ in the usual sense by
$$
\widehat{\mu_b}(\xi)=
        \int_{\R}e^{-2\pi i\xi x}\,d\mu_b(x).
$$
Moreover, we let $F(\xi)=|\widehat{\mu_b}(\xi)|^2$ be the modulus squared of the Fourier transform of $\mu_b$. The function $F$ satisfies the following property \cite[Section 7]{JorgensenPedersen98}.

\begin{lemma}\label{FS:Lemma}
    For every $x \in \R$ we have $F(bx)=\cos^2(2\pi x)F(x)$.
\end{lemma}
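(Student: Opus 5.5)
We will derive the identity from the self-similarity relation for $\mu_b$ in its integral form \eqref{eq:self-similar-integral}, applied to the bounded continuous function $f(y)=e^{-2\pi i \xi y}$. First we obtain a functional equation for $\widehat{\mu_b}$ itself, and then take the modulus squared.

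Fix $\xi\in\R$. Since $T_d(x)=(x+2d)/b$, we have
$$
f(T_d x)=e^{-2\pi i \xi (x+2d)/b}=e^{-4\pi i d\xi/b}\,e^{-2\pi i (\xi/b) x}.
$$
Plugging this into \eqref{eq:self-similar-integral} gives
$$
\widehat{\mu_b}(\xi)=\frac12\bigl(1+e^{-4\pi i\xi/b}\bigr)\,\widehat{\mu_b}(\xi/b).
$$
Now substitute $\xi=bx$, which yields
$$
\widehat{\mu_b}(bx)=\frac12\bigl(1+e^{-4\pi i x}\bigr)\widehat{\mu_b}(x).
$$

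It remains to compute the modulus of the prefactor. Factoring out $e^{-2\pi i x}$,
$$
\frac12\bigl(1+e^{-4\pi i x}\bigr)=e^{-2\pi i x}\cos(2\pi x),
$$
so $\bigl|\tfrac12(1+e^{-4\pi i x})\bigr|^2=\cos^2(2\pi x)$. Taking modulus squared of the functional equation then gives $F(bx)=\cos^2(2\pi x)F(x)$.

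No step is a real obstacle. The only point to check is that \eqref{eq:self-similar-integral} applies to complex-valued bounded Borel functions, which it does as stated. The argument does not use that $b$ is odd.
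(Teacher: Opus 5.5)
Your proof is correct and complete. Applying \eqref{eq:self-similar-integral} to $y\mapsto e^{-2\pi i\xi y}$ gives $\widehat{\mu_b}(bx)=\tfrac12\bigl(1+e^{-4\pi ix}\bigr)\widehat{\mu_b}(x)$, and the prefactor equals $e^{-2\pi ix}\cos(2\pi x)$, whose squared modulus is $\cos^2(2\pi x)$.

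The paper does not prove this lemma. It only cites Section~7 of \cite{JorgensenPedersen98}. A standard alternative route is the infinite-product formula
$\widehat{\mu_b}(\xi)=\prod_{j\ge1}\tfrac12\bigl(1+e^{-4\pi i\xi/b^j}\bigr)$,
which follows from the independence of the digits $\omega_j$ under $\nu$. Replacing $\xi$ by $bx$ shifts the index by one and produces exactly the extra factor $\tfrac12\bigl(1+e^{-4\pi ix}\bigr)$.

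Your one-step derivation is more economical. It avoids the infinite product and the limiting argument needed to justify it, and it is self-contained given the preliminaries recorded in Section~\ref{sec:2}. You are also right that the oddness of $b$ plays no role in this identity.
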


We also make use of the following elementary estimate whose proof we omit. 

\begin{lemma}\label{lemma:trig}
    For every $x \in \R$ and every odd integer $b > 1$ we have the estimate $$\cos^2 x\ge \frac1{b^2}\sin^2 x\,\cos^2(bx).$$
\end{lemma}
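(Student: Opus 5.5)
We need to prove $\cos^2 x\ge b^{-2}\sin^2 x\cos^2(bx)$ for odd $b>1$. The plan is to use the Chebyshev-type factorization of $\cos(bx)$ for odd $b$. Since $b$ is odd, $\cos(bx)=\cos x\cdot Q(\sin^2 x)$ for a polynomial $Q$. More usefully, for odd $b$ the function $\cos(bx)/\cos x$ is a trigonometric polynomial, and we will show the pointwise bound
$$
|\cos(bx)|\le b\,|\cos x| \qquad (x\in\R).
$$
Granting this bound, the lemma follows at once: $b^{-2}\sin^2x\cos^2(bx)\le \sin^2 x\cos^2 x\le\cos^2 x$.

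To prove the bound, we substitute $y=x-\pi/2$, so that $\cos x=-\sin y$. Since $b$ is odd, $bx=by+b\pi/2$, and hence $\cos(bx)=\pm\sin(by)$. The claim therefore becomes the classical inequality $|\sin(by)|\le b|\sin y|$ for integer $b\ge1$. This follows by induction on $b$ from
$$
|\sin((k+1)y)|=|\sin(ky)\cos y+\cos(ky)\sin y|\le|\sin(ky)|+|\sin y|.
$$

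There is no real obstacle here. The only care needed is the parity bookkeeping in the shift $bx=by+b\pi/2$, which makes $\cos(b\pi/2)=0$ and $\sin(b\pi/2)=\pm1$. This is exactly where the oddness of $b$ is used. For even $b$ the bound fails: at $x=\pi/2$ we have $\cos x=0$ while $|\cos(bx)|=1$.
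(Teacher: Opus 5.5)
Your proof is correct, and since the paper omits the proof of this lemma as elementary, there is no argument of the paper's to compare against. Your shift $x=y+\pi/2$ uses oddness of $b$ exactly through $\cos(b\pi/2)=0$ and reduces the claim to the classical bound $|\sin(by)|\le b|\sin y|$. This supplies a complete proof, and in fact gives the stronger inequality $\cos^2(bx)\le b^2\cos^2 x$, with the factor $\sin^2 x$ entering only through $\sin^2 x\le 1$.
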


For $n\ge0$ define
$$
        V_n=\Span \left \{\one_{C_\eta}: \eta \in \{ 0,1 \}^n \right \},
$$
where $\one_A$ denotes the characteristic function of a measurable set $A$.
For $n\ge1$, set $W_n=V_n\ominus V_{n-1}$. 
If
$\eta=(\eta_1,\ldots,\eta_{n-1})\in\{0,1\}^{n-1}$ and
$d\in\{0,1\}$, let
$$
        (\eta,d)=(\eta_1,\ldots,\eta_{n-1},d)\in\{0,1\}^n.
$$
Thus, $C_{(\eta,0)}$ and $C_{(\eta,1)}$ are the two level-$n$ subsets
of $C_\eta$ obtained by fixing the next coordinate to be $0$ or $1$.
For $\eta\in\{0,1\}^{n-1}$ define
$$
        \psi_\eta
        =
        2^{(n-1)/2}\left(\one_{C_{(\eta,0)}}-\one_{C_{(\eta,1)}}\right).
$$
The functions $\psi_\eta$ are the normalized Haar functions on a Cantor set, see \cite[p.~502]{guo2014boundary}. For completeness, we record
the following elementary lemma.

\begin{lemma}
    For each $n\ge1$, the family
$
        \{\psi_\eta:\eta\in\{0,1\}^{n-1}\}
$
is an orthonormal basis of $W_n$ with respect to the inner product $\langle \cdot, \cdot \rangle_{L^2(\mu_b)}$.
\end{lemma}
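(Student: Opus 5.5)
We will prove the lemma by checking three things: the family lies in $W_n$, it is orthonormal, and it has exactly $\dim W_n = 2^{n-1}$ elements. The only input needed is Lemma~\ref{prop:cantor-measure}(2): for each level $m$ the sets $C_\eta$ with $\eta\in\{0,1\}^m$ are pairwise disjoint, and each has mass $2^{-m}$.

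\emph{Dimensions.} The indicators $\one_{C_\eta}$, $\eta\in\{0,1\}^m$, have pairwise disjoint supports of positive measure. Hence they are pairwise orthogonal and nonzero in $L^2(\mu_b)$, so they are linearly independent and $\dim V_m=2^m$. Next we check $V_{n-1}\subset V_n$. For $\eta\in\{0,1\}^{n-1}$ we have $C_{(\eta,0)}\cup C_{(\eta,1)}\subset C_\eta$, and both pieces have mass $2^{-n}$. Their total mass $2^{-(n-1)}$ equals $\mu_b(C_\eta)$, so the union equals $C_\eta$ up to a $\mu_b$-null set (in fact exactly, by $C_b=T_0(C_b)\cup T_1(C_b)$). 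Therefore
\[
\one_{C_\eta}=\one_{C_{(\eta,0)}}+\one_{C_{(\eta,1)}}
\]
in $L^2(\mu_b)$. This gives $V_{n-1}\subset V_n$, and consequently $\dim W_n=2^n-2^{n-1}=2^{n-1}$.

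\emph{Membership in $W_n$.} Clearly $\psi_\eta\in V_n$. To show $\psi_\eta\perp V_{n-1}$, it suffices to pair it with each generator $\one_{C_{\eta'}}$, $\eta'\in\{0,1\}^{n-1}$. If $\eta'\neq\eta$, then $C_{\eta'}$ is disjoint from $C_\eta$, which contains both children, so the inner product vanishes. If $\eta'=\eta$, the inner product equals
\[
2^{(n-1)/2}\bigl(\mu_b(C_{(\eta,0)})-\mu_b(C_{(\eta,1)})\bigr)=0 .
\]

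\emph{Orthonormality and conclusion.} For $\eta\neq\eta'$, the supports of $\psi_\eta$ and $\psi_{\eta'}$ lie in the disjoint sets $C_\eta$ and $C_{\eta'}$, so they are orthogonal. For the norm, disjointness of the two children gives
\[
\|\psi_\eta\|^2=2^{n-1}\bigl(2^{-n}+2^{-n}\bigr)=1 .
\]
We thus have $2^{n-1}$ orthonormal vectors in $W_n$, a space of dimension $2^{n-1}$, so they form an orthonormal basis.

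No step here is a real obstacle. The only point needing care is the identity $\one_{C_\eta}=\one_{C_{(\eta,0)}}+\one_{C_{(\eta,1)}}$, which gives the nesting $V_{n-1}\subset V_n$; the mass count above settles it.
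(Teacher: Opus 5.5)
Your proof is correct and follows the paper's argument essentially step for step. Both get $\dim V_n=2^n$ from disjointness and positive mass, get $V_{n-1}\subset V_n$ by splitting $C_\eta$ into its two children, and then conclude from orthonormality, orthogonality to $V_{n-1}$, and a dimension count; your only addition is the mass-count justification of $\one_{C_\eta}=\one_{C_{(\eta,0)}}+\one_{C_{(\eta,1)}}$, which the paper simply asserts as a disjoint union.
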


\begin{proof}
There are $2^n$ pairwise disjoint sets $C_\eta$ with
$\eta\in\{0,1\}^n$, and each has positive $\mu_b$-measure. Hence, their
indicator functions are linearly independent and $\dim V_n=2^n$. For
$\eta\in\{0,1\}^{n-1}$, the set $C_\eta$ is the disjoint union of
$C_{(\eta,0)}$ and $C_{(\eta,1)}$, so $V_{n-1}\subset V_n$. Therefore,
$\dim W_n=2^n-2^{n-1}=2^{n-1}$.

For a fixed $\eta\in\{0,1\}^{n-1}$, we have
$$
        \|\psi_\eta\|_{L^2(\mu_b)}^2
        =
        2^{n-1}\bigl(\mu_b(C_{(\eta,0)})+\mu_b(C_{(\eta,1)})\bigr)
        =
        1.
$$
If $\eta\ne\eta'$, then the supports of $\psi_\eta$ and
$\psi_{\eta'}$ are disjoint, hence the functions are orthogonal. Moreover,
$\psi_\eta$ is orthogonal to $V_{n-1}$. Indeed, it is supported on
$C_\eta$ and
$$
        \int_{C_\eta}\psi_\eta\,d\mu_b
        =
        2^{(n-1)/2}
        \bigl(\mu_b(C_{(\eta,0)})-\mu_b(C_{(\eta,1)})\bigr)
        =
        0.
$$
Thus, the functions $\psi_\eta$ are orthonormal elements of $W_n$. Since
there are $2^{n-1}$ of them and $\dim W_n=2^{n-1}$, they form an
orthonormal basis of $W_n$.
\end{proof}

For each $m\ge0$ define
$$
        Q_m=
        \left\{
        \sum_{r=0}^{m-1}\varepsilon_r b^r:
        \varepsilon_0,\ldots,\varepsilon_{m-1}\in\{0,1\}
        \right\},
        \qquad Q_0=\{0\},
$$
so that $Q_m$ is the set of non-negative integers whose base-$b$ digits are
all either $0$ or $1$, with no non-zero digit in positions
$b^m,b^{m+1},\ldots$. Define $q_m:\{0,1\}^m\to Q_m$ by
$$
        q_m(\eta)
        =
        \sum_{j=1}^{m}\eta_j b^{m-j}
        , \quad \eta=(\eta_1,\ldots,\eta_m)\in\{0,1\}^m.
$$
Note that $q_m(\eta)$ is the integer whose base-$b$ digits are
$\eta_1,\ldots,\eta_m$.

\begin{lemma}\label{lma:q}
For every $n\in\N$, the map
\[
        q_{n-1} :\{0,1\}^{n-1}\to Q_{n-1}
\]
is a bijection. Moreover, for every $\eta\in\{0,1\}^{n-1}$, we have
\[
        b^n a_\eta=2b\,q_{n-1}(\eta).
\]
\end{lemma}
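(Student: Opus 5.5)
Both claims are elementary facts about base-$b$ expansions; I would verify them directly from the definitions of $q_{n-1}$, $Q_{n-1}$ and $a_\eta$. Throughout, write $m=n-1$ and $\eta=(\eta_1,\ldots,\eta_m)\in\{0,1\}^m$. For $m=0$ both sets are singletons, $q_0(\varnothing)=0$ and $a_\varnothing=0$, so there is nothing to prove. Assume $m\ge1$ from now on.

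\emph{Bijectivity.} Surjectivity is a reindexing. An element $\sum_{r=0}^{m-1}\varepsilon_r b^r$ of $Q_m$ equals $q_m(\eta)$ for $\eta_j=\varepsilon_{m-j}$, since $\sum_{j=1}^m\eta_j b^{m-j}=\sum_{r=0}^{m-1}\varepsilon_r b^r$ under the substitution $r=m-j$.

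For injectivity I would invoke uniqueness of base-$b$ digit expansions of non-negative integers. Since $b\ge3>1$, the digits $0,1$ lie in $\{0,\ldots,b-1\}$, so $q_m(\eta)$ has base-$b$ digits exactly $\eta_1,\ldots,\eta_m$. Hence $q_m(\eta)=q_m(\eta')$ forces $\eta=\eta'$.

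To keep the argument self-contained, one can instead do a short induction. Suppose $q_m(\eta)=q_m(\eta')$ and reduce modulo $b$. This gives $\eta_m\equiv\eta'_m \pmod b$, and since both lie in $\{0,1\}$ and $b>1$, we get $\eta_m=\eta'_m$. Now subtract $\eta_m$ and divide by $b$; this yields $q_{m-1}$ of the truncated tuples, and the induction hypothesis finishes the argument. (Equivalently, a cardinality count works: both sets have $2^m$ elements once injectivity is known.)

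\emph{The identity.} This is a direct computation:
\[
b^n a_\eta=\sum_{j=1}^{m}2\eta_j b^{n-j}=2b\sum_{j=1}^{m}\eta_j b^{m-j}=2b\,q_{m}(\eta),
\]
using $n-j=1+(m-j)$.

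None of these steps is a genuine obstacle. The only point needing care is the injectivity argument, specifically that digits in $\{0,1\}$ are valid base-$b$ digits. This holds because $b>1$, and the oddness of $b$ is not needed here.
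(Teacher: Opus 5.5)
Your proposal is correct and follows essentially the same route as the paper: bijectivity from uniqueness of base-$b$ expansions with digits in $\{0,1\}$, and the identity $b^n a_\eta = 2b\,q_{n-1}(\eta)$ by the same direct computation using $n-j=1+(n-1-j)$. Your mod-$b$ induction simply makes the uniqueness step self-contained, which the paper leaves implicit.
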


\begin{proof}
The first assertion is the uniqueness of the base-$b$ expansion with digits in
$\{0,1\}$. More explicitly, each $\eta\in\{0,1\}^{n-1}$ determines the
integer
$$
        q_{n-1}(\eta)=\sum_{j=1}^{n-1}\eta_jb^{n-1-j}\in Q_{n-1},
$$
and every element of $Q_{n-1}$ is obtained uniquely in this way.

To prove the claimed identity, we observe that if $\eta=(\eta_1,\ldots,\eta_{n-1})$, then
$$
        b^n a_\eta
        =
        \sum_{j=1}^{n-1}2\eta_j b^{n-j}
        =
        2b\sum_{j=1}^{n-1}\eta_j b^{n-1-j}
        =
        2b\,q_{n-1}(\eta).
$$
\end{proof}
For $n\ge1$ and $\lambda\in\R$, we define $w_n(\lambda)$ and $z_n(\lambda)$ via
$$
        w_n(\lambda)
        =
        F(\lambda/b^n)\sin^2(2\pi\lambda/b^n),
        \qquad
        z_n(\lambda)=\frac{2b\lambda}{b^n}\pmod1\in\T.
$$
We have the following proposition.
\begin{proposition}
\label{prop:haar-coefficient-identity}
Fix $n\ge1$. For each $q\in Q_{n-1}$, let $\eta(q)$ be the unique
element of $\{0,1\}^{n-1}$ satisfying $q_{n-1}(\eta(q))=q$ and define
$\psi_q=\psi_{\eta(q)}$. If
$$
        f=\sum_{q\in Q_{n-1}}c_q\psi_q\in W_n, \quad c_q \in \C,
$$
then, for every $\lambda\in\R$,
\begin{equation}\label{eq:coefficient-identity}
        |\langle f,e_\lambda\rangle|^2
        =
        \frac{w_n(\lambda)}{2^{n-1}}
        \left|
        \sum_{q\in Q_{n-1}}c_qe^{-2\pi iqz_n(\lambda)}
        \right|^2.
\end{equation}
\end{proposition}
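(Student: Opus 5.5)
The plan is to compute $\langle \psi_\eta, e_\lambda\rangle$ explicitly for a single Haar function, show that it factors as a common $\lambda$-dependent amplitude times a unimodular phase $e^{-2\pi i q z_n(\lambda)}$, and then sum over $q$ using linearity. Taking absolute values will then give \eqref{eq:coefficient-identity}.

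\textbf{Step 1: the integral over a level-$n$ cylinder.} Fix $\eta\in\{0,1\}^{n-1}$ and $d\in\{0,1\}$. By Lemma~\ref{prop:cantor-measure}(3) and (1), with $|(\eta,d)|=n$ and $a_{(\eta,d)}=a_\eta+2d/b^n$,
$$
\int_{C_{(\eta,d)}} e^{-2\pi i\lambda x}\,d\mu_b(x)=2^{-n}e^{-2\pi i\lambda a_\eta}e^{-4\pi i d\lambda/b^n}\,\widehat{\mu_b}(\lambda/b^n).
$$
(Apply the integral formula to real and imaginary parts, which are bounded Borel functions.) Subtracting the $d=1$ term from the $d=0$ term and multiplying by $2^{(n-1)/2}$ gives
$$
\langle e_\lambda,\psi_\eta\rangle=\int \psi_\eta\,\overline{e_\lambda}\,d\mu_b
=2^{(n-1)/2}2^{-n}e^{-2\pi i\lambda a_\eta}\,\widehat{\mu_b}(\lambda/b^n)\bigl(1-e^{-4\pi i\lambda/b^n}\bigr).
$$
Since $|1-e^{-i\theta}|^2=4\sin^2(\theta/2)$, the last factor has modulus squared $4\sin^2(2\pi\lambda/b^n)$. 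Whatever convention is used for $\langle f,e_\lambda\rangle$, the quantity $|\langle f,e_\lambda\rangle|$ is unaffected, so it suffices to work with $\sum_q c_q\int\psi_q\overline{e_\lambda}\,d\mu_b$.

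\textbf{Step 2: rewrite the phase.} By Lemma~\ref{lma:q}, $\lambda a_\eta=2b\lambda q_{n-1}(\eta)/b^n$. Since $q=q_{n-1}(\eta)$ is an integer, $e^{-2\pi i\lambda a_\eta}=e^{-2\pi i q\,(2b\lambda/b^n)}=e^{-2\pi i q z_n(\lambda)}$, and this is well defined modulo $1$ precisely because $q\in\Z$.

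\textbf{Step 3: sum over $q$.} By linearity,
$$
\sum_q c_q\int\psi_q\overline{e_\lambda}\,d\mu_b=2^{-(n+1)/2}\,\widehat{\mu_b}(\lambda/b^n)\bigl(1-e^{-4\pi i\lambda/b^n}\bigr)\sum_q c_q e^{-2\pi i q z_n(\lambda)}.
$$
Taking the modulus squared gives
$$
2^{-(n+1)}\cdot 4\,F(\lambda/b^n)\sin^2(2\pi\lambda/b^n)\,\Bigl|\sum_q c_q e^{-2\pi i q z_n(\lambda)}\Bigr|^2=\frac{w_n(\lambda)}{2^{n-1}}\Bigl|\sum_q c_q e^{-2\pi i q z_n(\lambda)}\Bigr|^2,
$$
which is \eqref{eq:coefficient-identity}.

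There is no real obstacle here. The only points needing care are the bookkeeping of the normalizing constants ($2^{(n-1)/2}\cdot 2^{-n}$, squared, times $4$, equals $2^{-(n-1)}$) and checking that the phase depends on $\lambda$ only through $z_n(\lambda)$ modulo $1$. The latter holds because $q$ is an integer, which is exactly the content of Lemma~\ref{lma:q}.
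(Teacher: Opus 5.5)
Your proposal is correct and follows essentially the same route as the paper: you compute the coefficient of a single $\psi_\eta$ via the cylinder integral formula of Lemma~\ref{prop:cantor-measure}, rewrite the phase $e^{-2\pi i\lambda a_\eta}$ as $e^{-2\pi i q z_n(\lambda)}$ using Lemma~\ref{lma:q} and the integrality of $q$, then sum linearly over $q$ and take the squared modulus. Your bookkeeping of constants ($2^{-(n+1)}\cdot 4=2^{-(n-1)}$) is right, and so is your remark that the inner-product convention does not affect $|\langle f,e_\lambda\rangle|$.
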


\begin{proof}
Fix $\lambda\in\R$ and $\eta\in\{0,1\}^{n-1}$. The integral formula in Lemma~\ref{prop:cantor-measure}
over $C_{(\eta,d)}$, where $d\in\{0,1\}$, gives
$$
        \int_{C_{(\eta,d)}}e^{-2\pi i\lambda x}\,d\mu_b(x)
        =
        2^{-n}
        e^{-2\pi i\lambda(a_\eta+2d/b^n)}
        \widehat{\mu_b}(\lambda/b^n).
$$
Therefore
$$
        \langle\psi_\eta,e_\lambda\rangle
        =
        2^{(n-1)/2}2^{-n}
        e^{-2\pi i\lambda a_\eta}
        \widehat{\mu_b}(\lambda/b^n)
        \left(1-e^{-2\pi i\,2\lambda/b^n}\right).
$$
Taking absolute values gives
$$
        |\langle\psi_\eta,e_\lambda\rangle|^2
        =
        2^{-(n-1)}
        F(\lambda/b^n)\sin^2(2\pi\lambda/b^n)
        =
        2^{-(n-1)}w_n(\lambda).
$$
Furthermore, by Lemma \ref{lma:q}, we have
$$
        e^{-2\pi i\lambda a_\eta}
        =
        e^{-2\pi i q_{n-1}(\eta)z_n(\lambda)}.
$$
Now, we sum the preceding formula with weights $c_q$,
using $\eta=\eta(q)$ for each $q\in Q_{n-1}$. The common factor has
squared modulus $2^{-(n-1)}w_n(\lambda)$ and the remaining phase sum is
exactly
$$
        \sum_{q\in Q_{n-1}}c_qe^{-2\pi iqz_n(\lambda)}.
$$
Squaring and taking absolute value proves the desired identity.
\end{proof}

We write $\T=\R/\Z$ and denote by $m_\T$ the normalized Haar measure on $\T$.

\begin{proposition}\label{prop:sampling}
Assume that $E(\Lambda)$ is a Fourier frame for
$L^2(\mu_b)$ with frame bounds $A,B$. For $n\ge1$ define the positive measure
$$
        \sigma_n
        =
        \frac1{2^{n-1}}
        \sum_{\lambda\in\Lambda}
        w_n(\lambda)\delta_{z_n(\lambda)}
$$
on $\T$. Then $\sigma_n(\T)\le B$. Moreover, for every trigonometric
polynomial
$$
        P(z)=\sum_{q\in Q_{n-1}}c_qe^{-2\pi iqz}, \quad c_q \in \C,
$$
we have
\begin{equation}\label{eq:sampling-frame}
        A\|P\|_{L^2(\T,m_\T)}^2
        \le
        \int_{\T}|P(z)|^2\,d\sigma_n(z)
        \le
       B\|P\|_{L^2(\T,m_\T)}^2.
\end{equation}
\end{proposition}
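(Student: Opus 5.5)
The plan is to transfer the frame inequalities on the subspace $W_n$ to the torus through Proposition~\ref{prop:haar-coefficient-identity}, and to obtain the bound on $\sigma_n(\T)$ from the upper frame inequality applied to a suitable unit vector.

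\emph{The sampling inequality.} Given a trigonometric polynomial $P(z)=\sum_{q\in Q_{n-1}}c_qe^{-2\pi iqz}$, set
$$
f=\sum_{q\in Q_{n-1}}c_q\psi_q\in W_n.
$$
The elements of $Q_{n-1}$ are distinct integers, so the exponentials $e^{-2\pi iqz}$, $q\in Q_{n-1}$, are orthonormal in $L^2(\T,m_\T)$. This gives $\|P\|_{L^2(\T,m_\T)}^2=\sum_q|c_q|^2$. Since $\{\psi_q\}$ is an orthonormal basis of $W_n$, we also have $\|f\|_{L^2(\mu_b)}^2=\sum_q|c_q|^2$. By Proposition~\ref{prop:haar-coefficient-identity}, for every $\lambda\in\Lambda$,
$$
|\langle f,e_\lambda\rangle|^2=\frac{w_n(\lambda)}{2^{n-1}}\,|P(z_n(\lambda))|^2 .
$$
Summing over $\lambda\in\Lambda$ turns the left side into $\sum_{\lambda}|\langle f,e_\lambda\rangle|^2$ and the right side into $\int_\T|P|^2\,d\sigma_n$. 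This uses the definition of $\sigma_n$ and the fact that all summands are nonnegative, so no convergence issue arises. The frame inequality for $f$ then yields \eqref{eq:sampling-frame} directly.

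\emph{The total mass.} Take $P\equiv$ a single monomial, namely $c_0=1$ and all other $c_q=0$; this is allowed because $0\in Q_{n-1}$. Then $|P|\equiv1$ on $\T$, so $\int_\T|P|^2\,d\sigma_n=\sigma_n(\T)$ and $\|P\|_{L^2(\T,m_\T)}^2=1$. The upper inequality in \eqref{eq:sampling-frame} therefore gives $\sigma_n(\T)\le B$. This amounts to applying the upper frame bound to the unit vector $\psi_{\eta}$ with $\eta=(0,\ldots,0)$. It also shows that $\sigma_n$ is a finite positive measure, which is needed for the integral to make sense; here $w_n\ge0$ is clear from its definition.

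\emph{Expected difficulty.} There is essentially no obstacle. The only point needing care is bookkeeping: the phase $e^{-2\pi iq z_n(\lambda)}$ depends only on $z_n(\lambda)$ modulo $1$, because $q$ is an integer, so $|P|^2$ is a well-defined function on $\T$. Consequently, several $\lambda$ sharing the same $z_n(\lambda)$ simply add their weights in $\sigma_n$.
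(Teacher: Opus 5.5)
Your proposal is correct and follows the paper's proof. You set $f=\sum_q c_q\psi_q$, identify $\|f\|_{L^2(\mu_b)}^2$ with $\|P\|_{L^2(\T,m_\T)}^2$ through the two orthonormal systems, sum the coefficient identity over $\Lambda$ to obtain $\int_\T|P|^2\,d\sigma_n$, apply the frame bounds, and get $\sigma_n(\T)\le B$ by applying the upper frame bound to a single unit vector $\psi_\eta$, as the paper does, here with $\eta=(0,\ldots,0)$.
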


\begin{proof}
Fix any $\eta\in\{0,1\}^{n-1}$. By the proof of the preceding
proposition, we have
$$
        |\langle\psi_\eta,e_\lambda\rangle|^2
        =
        2^{-(n-1)}w_n(\lambda).
$$
Hence,
$$
        \sigma_n(\T)
        =
        \frac1{2^{n-1}}\sum_{\lambda\in\Lambda}w_n(\lambda)
        =
        \sum_{\lambda\in\Lambda}|\langle\psi_\eta,e_\lambda\rangle|^2
        \le B,
$$
where the last inequality is the upper frame bound applied to the unit
vector $\psi_\eta$.
Let
$$
        f=\sum_{q\in Q_{n-1}}c_q\psi_q\in W_n.
$$
Since the functions $\psi_q$ form an orthonormal basis of $W_n$, we have
$$
        \|f\|_{L^2(\mu_b)}^2=\sum_{q\in Q_{n-1}}|c_q|^2.
$$
The frequencies in $Q_{n-1}$ are distinct integers, so orthogonality in
$L^2(\T,m_\T)$ gives
$$
        \|P\|_{L^2(\T,m_\T)}^2=\sum_{q\in Q_{n-1}}|c_q|^2.
$$
Finally, the coefficient identity in Proposition~\ref{prop:haar-coefficient-identity} allows us to write
$$
        \sum_{\lambda\in\Lambda}|\langle f,e_\lambda\rangle|^2
        =
        \frac1{2^{n-1}}\sum_{\lambda\in\Lambda}
        w_n(\lambda)|P(z_n(\lambda))|^2
        =
        \int_{\T}|P(z)|^2\,d\sigma_n(z).
$$
Applying the frame inequalities to $f$ proves the desired two-sided
estimate.
\end{proof}

\section{The contradiction argument}
For $m\ge0$ define
$$
        D_m(z)=\sum_{q\in Q_m}e^{-2\pi iqz},
$$
and, for $n\ge2$, define
$$
        p_n(z)=2^{-(n-2)/2}D_{n-2}(bz).
$$
Basic properties of these functions are recorded in the following proposition.
\begin{proposition}\label{prop:test-polynomials}
For every $n\ge2$, we have
\begin{equation*}\label{eq:test-norms}
        \|p_n\|_{L^2(\T,m_\T)}=1,
        \quad
        \|(1+e^{-2\pi iz})p_n\|_{L^2(\T,m_\T)}^2=2
\end{equation*}
and
\begin{equation}\label{eq:pn-product}
        |p_n(z)|^2
        =
        2^{n-2}\prod_{r=1}^{n-2}\cos^2(\pi b^rz),
\end{equation}
where the empty product is interpreted as $1$. Moreover, for every $\lambda \in \R$, if we define the quantities $t,z$ via $t=\lambda/b^n$ and $z=z_n(\lambda)$, then
\begin{equation}\label{eq:F-product}
        F(\lambda)
        =
        F(t)\cos^2(2\pi t)\cos^2(\pi z)
        \prod_{r=1}^{n-2}\cos^2(\pi b^rz).
\end{equation}
\end{proposition}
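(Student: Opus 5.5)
The plan has two parts: the norm identities come from counting frequencies, and the two product formulas come from factoring $D_m$ and iterating Lemma~\ref{FS:Lemma}.

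\emph{Norms.} Write $D_{n-2}(bz)=\sum_{q\in Q_{n-2}}e^{-2\pi i bqz}$. The frequencies $bq$, $q\in Q_{n-2}$, are distinct integers, and there are $|Q_{n-2}|=2^{n-2}$ of them by Lemma~\ref{lma:q}. Orthogonality in $L^2(\T,m_\T)$ then gives $\|D_{n-2}(b\,\cdot)\|^2=2^{n-2}$, hence $\|p_n\|=1$. For the second identity, $(1+e^{-2\pi iz})p_n$ has frequency set $bQ_{n-2}\cup(bQ_{n-2}+1)$. These two sets are disjoint, since the first consists of multiples of $b$ and the second of integers congruent to $1$ mod $b$, with $b>1$. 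So all $2^{n-1}$ frequencies are distinct, each coefficient is $2^{-(n-2)/2}$, and the squared norm is $2^{n-1}\cdot 2^{-(n-2)}=2$.

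\emph{Product for $|p_n|^2$.} By the definition of $Q_m$, the generating sum factors:
\[
D_m(z)=\prod_{r=0}^{m-1}\bigl(1+e^{-2\pi i b^r z}\bigr).
\]
Using $|1+e^{-2\pi i x}|^2=4\cos^2(\pi x)$ gives $|D_m(z)|^2=4^m\prod_{r=0}^{m-1}\cos^2(\pi b^r z)$. Substituting $bz$ and $m=n-2$ shifts the index to $r=1,\dots,n-2$. Multiplying by $2^{-(n-2)}$ then gives \eqref{eq:pn-product}.

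\emph{Formula \eqref{eq:F-product}.} Since $\lambda=b^n t$, iterating Lemma~\ref{FS:Lemma} $n$ times gives
\[
F(\lambda)=F(t)\prod_{k=0}^{n-1}\cos^2(2\pi b^k t).
\]
Next I match each factor with one in \eqref{eq:F-product}, using the relation between $t$ and $z$. By definition $z\equiv 2bt\pmod 1$, and $\cos^2(\pi\,\cdot)$ is $1$-periodic. Moreover, for $r\ge0$, $b^r z\equiv 2b^{r+1}t \pmod{1}$, because $b^r$ is an integer and so multiplication by it preserves congruence mod $1$. The matching is then:
\begin{itemize}
\item the $k=0$ factor is $\cos^2(2\pi t)$, which appears as is;
\item the $k=1$ factor is $\cos^2(2\pi bt)=\cos^2(\pi z)$;
\item for $2\le k\le n-1$, setting $r=k-1\in\{1,\dots,n-2\}$, we have $\cos^2(2\pi b^k t)=\cos^2(\pi\cdot 2b^{r+1}t)=\cos^2(\pi b^r z)$.
\end{itemize}
This reproduces exactly the right-hand side of \eqref{eq:F-product}. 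The case $n=2$ needs a moment's care: there the product over $r$ is empty and only the $k=0,1$ factors appear.

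The only point needing care is the index bookkeeping in this last step, together with checking that reducing $z$ mod $1$ is harmless. That holds because every factor has the form $\cos^2(\pi\cdot\text{integer}\cdot z)$, which is well defined on $\T$.
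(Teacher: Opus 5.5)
Your proof is correct and follows essentially the same route as the paper: you count distinct integer frequencies for the norm identities, factor $D_m(z)=\prod_{r=0}^{m-1}(1+e^{-2\pi i b^r z})$ to get \eqref{eq:pn-product}, and iterate Lemma~\ref{FS:Lemma} with the reindexing via $z\equiv 2bt \pmod 1$ to get \eqref{eq:F-product}. Your extra justifications are details the paper leaves implicit: disjointness of $bQ_{n-2}$ and $1+bQ_{n-2}$ via residues mod $b$, the count $|Q_{n-2}|=2^{n-2}$, and the $1$-periodicity of $\cos^2(\pi\,\cdot)$.
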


\begin{proof}
The frequencies of $p_n$ are the elements of $bQ_{n-2}$, and the
frequencies of $e^{-2\pi iz}p_n$ are the elements of $1+bQ_{n-2}$.
These two sets are disjoint subsets of $Q_{n-1}$. Since distinct integer
frequencies are orthogonal in $L^2(\T,m_\T)$, we get
$$
        \|p_n\|_{L^2(\T,m_\T)}^2
        =
        2^{-(n-2)}|Q_{n-2}|
        =
        1
$$
and
$$
        \|(1+e^{-2\pi iz})p_n\|_{L^2(\T,m_\T)}^2
        =
        2.
$$
Note that we may factor
$$
        D_m(z)=\prod_{r=0}^{m-1}\left(1+e^{-2\pi ib^rz}\right).
$$
Taking $m=n-2$, replacing $z$ by $bz$ and using
$
        |1+e^{-2\pi iu}|^2=4\cos^2(\pi u)
$
gives
$$
        |p_n(z)|^2
        =
        2^{-(n-2)}
        \prod_{r=0}^{n-3}4\cos^2(\pi b^{r+1}z)
        =
        2^{n-2}\prod_{r=1}^{n-2}\cos^2(\pi b^rz).
$$

It remains to prove the product formula for $F$. Iterating Lemma~\ref{FS:Lemma} gives
$$
        F(\lambda)
        =
        F(t)\prod_{r=0}^{n-1}\cos^2(2\pi b^rt).
$$
Since $z=2bt\pmod1$, the factor with $r=1$ is
$\cos^2(\pi z)$, while the factors with $r=2,\ldots,n-1$ are
$\cos^2(\pi b^{r-1}z)$. This is the displayed formula.
\end{proof}

We are now ready to prove the main result of this paper.

\begin{proof}[Proof of Theorem \ref{thm:no-frame}]
Suppose, to the contrary, that $E(\Lambda)=\{e_\lambda:\lambda\in\Lambda\}$
is a Fourier frame for $L^2(\mu_b)$ with bounds $0<A\le B<\infty$.

Fix $n\ge2$. The frequencies of $p_n$ lie in $bQ_{n-2}$, while the
frequencies of $e^{-2\pi iz}p_n$ lie in $1+bQ_{n-2}$. Both sets are
subsets of $Q_{n-1}$. Hence, both $p_n$ and
$(1+e^{-2\pi iz})p_n$ are of the form allowed by Proposition~\ref{prop:sampling}. Apply the estimate in this proposition to
$(1+e^{-2\pi iz})p_n$. By the first part of
Proposition \ref{prop:test-polynomials}, we have
$$
        2A
        \le
        \int_{\T}|1+e^{-2\pi iz}|^2|p_n(z)|^2\,d\sigma_n(z)
        =
        4\int_{\T}\cos^2(\pi z)|p_n(z)|^2\,d\sigma_n(z),
$$
and hence
\begin{equation}\label{eq:cos2-lower}
        \int_{\T}\cos^2(\pi z)|p_n(z)|^2\,d\sigma_n(z)\ge \frac A2.
\end{equation}
Applying the upper bound in Proposition~\ref{prop:sampling} to $p_n$ gives
\begin{equation}\label{eq:pn-upper}
        \int_{\T}|p_n(z)|^2\,d\sigma_n(z)\le B.
\end{equation}
Cauchy--Schwarz, applied with the finite positive measure $\tau_n$ defined by
$
        d\tau_n(z)=|p_n(z)|^2\,d\sigma_n(z),
$
then gives
$$
\left(
        \int_{\T}\cos^2(\pi z)\,d\tau_n(z)
\right)^2
\le
\tau_n(\T)
\left(
        \int_{\T}\cos^4(\pi z)\,d\tau_n(z)
\right).
$$
Together with the two preceding inequalities, this yields
\begin{equation}\label{eq:cos4-lower}
        \int_{\T}\cos^4(\pi z)|p_n(z)|^2\,d\sigma_n(z)
        \ge
        \frac{A^2}{4B}.
\end{equation}
For $n\ge2$ and $\lambda\in\Lambda$ define
$$
        X_n(\lambda)
        =
        \frac1{b^2 2^{n-2}}
        w_n(\lambda)
        \cos^4(\pi z_n(\lambda))
        |p_n(z_n(\lambda))|^2.
$$
Using the definition of $\sigma_n$ and \eqref{eq:cos4-lower}, we get
\begin{equation}\label{eq:X-lower}
\begin{aligned}
        \sum_{\lambda\in\Lambda}X_n(\lambda)
        &=
        \frac1{b^2 2^{n-2}}
        \sum_{\lambda\in\Lambda}
        w_n(\lambda)\cos^4(\pi z_n(\lambda))
        |p_n(z_n(\lambda))|^2       \\
        &=
        \frac2{b^2}
        \int_{\T}\cos^4(\pi z)|p_n(z)|^2\,d\sigma_n(z)
        \ge
        \frac{A^2}{2b^2B}.
\end{aligned}
\end{equation}

We next prove that the same sums tend to zero. Fix $n\ge2$,
$\lambda\in\Lambda$ and put $t=\lambda/b^n$ and $z=z_n(\lambda)$.
Lemma~\ref{lemma:trig}, applied with $x=2\pi t$, gives
$
        \cos^2(2\pi t)
        \ge
        \frac1{b^2}\sin^2(2\pi t)\cos^2(2\pi bt)
        =
        \frac1{b^2}\sin^2(2\pi t)\cos^2(\pi z).
$
Combining this inequality with \eqref{eq:pn-product} and
\eqref{eq:F-product}, we obtain the pointwise estimate
\begin{equation}\label{eq:X-domination}
        0\le X_n(\lambda)\le F(\lambda).
\end{equation}
The upper frame inequality applied to the constant function $1$ gives
\begin{equation}\label{eq:F-summable}
        \sum_{\lambda\in\Lambda}F(\lambda)
        =
        \sum_{\lambda\in\Lambda}|\langle 1,e_\lambda\rangle|^2
        \le B.
\end{equation}
For each fixed $\lambda$, the continuity of $\widehat{\mu_b}$ gives
$
        F(\lambda/b^n) \to F(0)=1,
$
while we also have
$
        \sin^2(2\pi\lambda/b^n)\to 0.
$
Hence, $w_n(\lambda)\to0$. Since $p_n$ is a normalized sum of
$2^{n-2}$ unimodular exponentials, we have
$
        |p_n(z)|^2\le 2^{n-2}
$
and therefore
$$
        0\le X_n(\lambda)\le \frac1{b^2}w_n(\lambda)\to 0,
        \quad n\to\infty,
$$
for every fixed $\lambda\in\Lambda$. By \eqref{eq:X-domination} and
\eqref{eq:F-summable}, dominated convergence on the countable set
$\Lambda$ yields
\begin{equation}\label{eq:X-vanishing}
        \sum_{\lambda\in\Lambda}X_n(\lambda)\to 0.
\end{equation}
This contradicts the uniform lower bound \eqref{eq:X-lower}. Therefore,
$\mu_b$ does not admit a Fourier frame.
\end{proof}

\section{Appendix: Lean formalization.}

This section discusses the Lean 4 \cite{lean4} formalization of Theorem \ref{thm:no-frame}. The source code of the formalization can be found in \url{https://github.com/jaumededios/Cantor_Measure_Frames}. The file \lean{Showcase.lean} therein provides a self-contained version of the main statement that was Lean verified. 

The statements in \lean{Showcase.lean} have been carefully curated and reviewed by the authors with the objective of making them understandable to a broad audience, but the rest of the Lean code (the files containing the Lean proof) have been generated by large language models. In this project, the goal of the Lean translation is simply to verify correctness -- the Lean proof is not meant to be digested by the reader.

This autoformalization relies heavily on the definitions of multiple mathematical objects in Mathlib \cite{mathlib}, which is the standard Lean mathematics library. In particular, the trust we placed in this formalization is possible only because of the enormous efforts by the Mathlib community.

The syntax of theorems in Lean takes the form
\begin{leancode}
theorem TheoremName -- Two dashes start a comment, (this is a comment)
    {variable1 : Type} (variable2 : Type) (variable3 : Type) : 
    -- The colon separates the hypotheses and the conclusion
    conclusion
\end{leancode}
Every variable in Lean must have a type. Types are a primitive notion, analogous to sets. In Lean, hypotheses are propositions and propositions are variables. For example, if a hypothesis is \emph{let $b>1$ be a natural number}, in Lean we would write \lean{{b : ℕ} (hb : 1 < b)}.

The main theorem of the paper (deliberately rewritten closer to the Lean analog) may be stated as follows.

\begin{theorem*}
    Let $b\in \mathbb N$ be an odd number larger than two. Let $\mu_b$ be the Cantor measure with base $b$.
    Let $F:= (f_1, \dots, f_n, \dots)$ be a family of complex exponentials (i.e.~$f_j(x) := e^{2\pi i k_j x}$ for some $k_j \in \R$). Then the family $F$ is not a frame. 
\end{theorem*}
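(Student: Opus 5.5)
This restated theorem is Theorem~\ref{thm:no-frame}, so the plan is to reduce the formal statement to it and then rerun the contradiction argument already given. Three points need checking. First, a family indexed by $j\in\N$ corresponds to a countable multiset $\Lambda=\{k_j\}$. Repeated frequencies cause no trouble, because every step of the proof uses only sums over the index set: the frame inequalities, the measures $\sigma_n$ of Proposition~\ref{prop:sampling}, and the sums $\sum X_n$ and $\sum F$. Second, the hypothesis ``larger than two'' for an odd $b$ is the same as $b>1$ odd, i.e.\ $b\ge3$. Third, ``frame'' should be read with the usual two constants $0<A\le B<\infty$ on $L^2(\mu_b)$.

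With these identifications, I would argue by contradiction and follow the same order of steps.
\begin{enumerate}
\item Use Proposition~\ref{prop:haar-coefficient-identity} and Proposition~\ref{prop:sampling} to transfer the frame inequalities on the Haar spaces $W_n$ to two-sided sampling inequalities for the measures $\sigma_n$ on $\T$, valid for polynomials with frequencies in $Q_{n-1}$.
\item Test these inequalities on $p_n$ and on $(1+e^{-2\pi iz})p_n$, using the norms from Proposition~\ref{prop:test-polynomials}. Cauchy--Schwarz then gives the scale-independent lower bound $\int\cos^4(\pi z)|p_n|^2\,d\sigma_n\ge A^2/(4B)$.
\item Rewrite this bound as $\sum_{\lambda} X_n(\lambda)\ge A^2/(2b^2B)$.
\item Dominate $X_n(\lambda)\le F(\lambda)$ pointwise, combining Lemma~\ref{lemma:trig} with the product formulas \eqref{eq:pn-product} and \eqref{eq:F-product}.
\item Apply the upper frame bound to the constant function $1$ to get $\sum_\lambda F(\lambda)\le B$.
\item Observe that $X_n(\lambda)\le w_n(\lambda)/b^2\to0$ for each fixed $\lambda$, because $\sin^2(2\pi\lambda/b^n)\to0$.
\item Conclude by dominated convergence over the countable index set that $\sum_\lambda X_n(\lambda)\to0$, contradicting the lower bound in step 3.
\end{enumerate}

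The mathematical content is therefore the same as before. I expect the main obstacle to be the bookkeeping rather than any estimate: checking that the formal definitions (push-forward of the Bernoulli measure, the $L^2$ inner product, and $e_\lambda$ with the $2\pi$ normalization) agree with those of Section~\ref{sec:2}, so that Lemma~\ref{FS:Lemma} and the cylinder integral formula \eqref{eq:cylinder-integral} apply verbatim. Lemma~\ref{lemma:trig}, whose proof was omitted, is the one elementary estimate I would verify directly. Write $|\sin(bx)/\sin x|\le b$ in the form $|\cos(bx')|\le b|\cos x'|$, which holds for odd $b$ after the shift $x'=x+\pi/2$; multiplying by $|\sin x|$ and using $|\sin x\cos(bx)|\le|\sin x|$ with a similar identity for $\cos(bx)/\cos x$ then gives the stated inequality.
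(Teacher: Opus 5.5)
Your proposal is correct and follows the paper's argument: the restated theorem is Theorem~\ref{thm:no-frame}, and your seven steps are the paper's own. Those steps are the sampling measures $\sigma_n$, testing on $p_n$ and $(1+e^{-2\pi iz})p_n$, Cauchy--Schwarz, the domination $X_n\le F$, the bound $\sum F(\lambda)\le B$, and dominated convergence; your remarks on index multiplicities and on $b>2$ versus $b>1$ are harmless bookkeeping, and for Lemma~\ref{lemma:trig} the inequality $|\cos(bx)|\le b|\cos x|$ for odd $b$ (from $|\sin(by)|\le b|\sin y|$ with $y=x+\pi/2$) together with $\sin^2x\le1$ already suffices, so the extra step involving $\cos(bx)/\cos x$ is unnecessary.
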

In Lean, this theorem will translate to the following statement.
\begin{leancode}
theorem NoFourierFrameExists
    {b : ℕ} (hb : 1 < b) (hb_odd : Odd b)
    (F : ℕ → Lp ℂ 2 (cantor_μ b)) (hF : IsExpSystem F) :
    ¬ IsFrame F
\end{leancode}
The definition of \lean{IsFrame} (see Section \ref{sec:fourier_frames}) takes advantage of implicit variables. Indeed, since $F$ is defined as a family of functions in $L^2(\mu_b)$, one can infer the Hilbert space in question from the family $F$.

The above theorem contains three definitions that are not available in Mathlib and that had to be defined by us. 
The rest of this section explains how these definitions were built from the Mathlib primitives.

\begin{enumerate}
    \item \lean{cantor_μ b}, the Cantor measure $\mu_b$.
    \item \lean{IsExpSystem F}, which states that all elements of a family of functions in $L^2(\mu)$ with $\mu$ a measure on $\R$ are $\mu$-almost everywhere equal to a complex exponential function.
    \item \lean{IsFrame F}, which states that a family of functions $F$ is a frame. 
\end{enumerate}

\subsection{Definition of the Cantor measure \texorpdfstring{$\mu_b$}{mu b}}

We define the \emph{Cantor measure} with base $b>1$ as the pushforward of the uniform measure on  $\{0,1\}^{\mathbb N}$ by the map that sends  $\omega = (\omega_0, \dots , \omega_k, \dots) \in \{0,1\}^{\mathbb N_0}$ to the number 
$$
\sum_{n\in \mathbb N_0} \frac{2\cdot \omega_n}{b^{n+1}}
$$
Note the change in indexing from the rest of the paper, as in Lean the natural numbers start at zero.

To translate this to Lean, we start by defining an \emph{abbreviation}\footnote{There is a subtle difference between how Lean \emph{unfolds} abbreviations and definitions that makes using an abbreviation here more convenient. For the purposes of understanding the result, however, they are equivalent.}. In Lean, \lean{Fin 2} is the canonical type with two elements, which is canonically equivalent to  $\{0,1\}\subset \mathbb N$.
\begin{leancode}
/-- The set of functions from the Naturals to {0,1}. -/
abbrev Ω : Type := ℕ → Fin 2
\end{leancode}
We define the uniform measure over $\Omega$ by taking the infinite product of the uniform measures on $\{ 0,1 \}$. In order to define these uniform measures, Mathlib has a function \lean{PMF.uniformOfFintype}, which takes a finite type $\{ 0, \dots, n-1 \}$ and returns a \emph{probability mass function}. This \emph{probability mass function} must then be cast into a regular measure using \lean{.toMeasure}, the type that the infinite product of probability measures needs. 
\begin{leancode}
/-- The uniform product measure on Ω. -/
def uniformBoolSeq : Measure Ω :=
  Measure.infinitePi 
    (fun _ : ℕ ↦ (PMF.uniformOfFintype (Fin 2)).toMeasure)
\end{leancode}
To define the Cantor measure we must define the coding map $\sum_{n\in \mathbb N} \frac{2\cdot \omega_n}{b^{n+1}}$ as follows.
\begin{leancode}
/-- The coding map for the `{0, 2}` base-`b` Cantor measure. -/
def code (b : ℝ) (ω : Ω) : ℝ :=
  ∑' n, 2 * (ω n) / (b ^ (n + 1))
\end{leancode}
As a last step, we define the Cantor measure $\mu_b$ as the pushforward of the coding map of the uniform measure on Boolean sequences.
\begin{leancode}
/-- The base-`b` Cantor measure, using digits `{0, 2}`. -/
def cantor_μ (b : ℕ) : Measure ℝ :=
  Measure.map (code b) uniformBoolSeq
\end{leancode}

\subsection{Definition of Fourier frames}
\label{sec:fourier_frames}
The main result will split the definition of Fourier frames into an \lean{IsFrame} statement (stating the usual frame condition) and an \lean{IsExponential} statement (stating that each element of the frame is a complex exponential). First, we define a complex exponential with frequency $k \in \R$.

\begin{leancode}
/-- Definition of the complex exponential at frequency k. -/
def e (k : ℝ) : ℝ → ℂ := 
  fun x ↦ Complex.exp (2 * π * Complex.I * k * x)
\end{leancode}

Let $\mu$ be a measure on $\mathbb{R}$. A family of $L^p(\mu)$ functions $\{f_j\}_{j\in \mathbb{N}}$ is an exponential family if, for any $j$, there is some $k \in \R$ such that the function $f_j$ is $\mu$-almost everywhere equal to $e^{2\pi i k \cdot}$. In Lean, we may write this as follows.

\begin{leancode}
/-- A family of Lᵖ functions is an exponential system if they are
    μ-a.e. equal to exponential functions. -/
def IsExpSystem {μ : Measure ℝ} {p : ℝ≥0∞} (F : ℕ → Lp ℂ p μ) : Prop :=
   (∀ j : ℕ , ∃ k : ℝ, (F j) =ᵐ[μ] (e k) )
\end{leancode}

The next definition is that of a frame. Let $H$ be a Hilbert space. A family of elements $\{h_j\}_{j \in \iota} \subset H$ is a frame if there are constants $0<A,B<\infty$ such that for any $g\in H$ one has
$$
A \|g\|^2 \le \sum_{j \in \iota}|\langle g, h_j\rangle|^2 \le B \|g\|^2.
$$
In order to define this in Lean, one has to make some changes. Mathlib does not define Hilbert spaces directly, but instead specifies the properties that are needed. In this case, we define $H$ to be a normed additive group with an inner product structure\footnote{In order to define a Hilbert space, one would need to add the \lean{[CompleteSpace E]} condition, which is not needed to define a frame.}. Mathlib reserves $|\cdot|$ for the real absolute value, and uses $\|\cdot\|$ for the norm of a complex number. Moreover, Lean uses $∑'$ to denote sums over infinite sets. With these caveats in hand, the definition of a frame in Lean is as follows.
\begin{leancode}
def IsFrame {ι H : Type*} [NormedAddCommGroup H] [InnerProductSpace ℂ H]
    (atoms : ι → H) : Prop :=
  ∃ A > 0, ∃ B > 0,
    ∀ f : H,
      A * ‖f‖ ^ 2 ≤ ∑' j, ‖⟪f, atoms j⟫_ℂ‖ ^ 2 ∧
      ∑' j, ‖⟪f, atoms j⟫_ℂ‖ ^ 2 ≤ B * ‖f‖ ^ 2
\end{leancode}

With these definitions, the translation of the main theorem into Lean is now complete.

\section*{Acknowledgments}

L.L.~is grateful to the Azrieli Foundation for the award of an Azrieli Fellowship and acknowledges the support of this research by ISF Grant No.~854/25.

\bibliographystyle{plain}
\bibliography{bibfile}

\end{document}